%% LyX 2.1.4 created this file.  For more info, see http://www.lyx.org/.
%% Do not edit unless you really know what you are doing.
\documentclass[oneside,english]{amsart}
\usepackage[T1]{fontenc}
\usepackage[latin9]{inputenc}
\usepackage{verbatim}
\usepackage{amstext}
\usepackage{amsthm}
\usepackage{amssymb}

\makeatletter
%%%%%%%%%%%%%%%%%%%%%%%%%%%%%% Textclass specific LaTeX commands.
\numberwithin{equation}{section}
\numberwithin{figure}{section}
  \theoremstyle{plain}
  \newtheorem*{thm*}{\protect\theoremname}
 \theoremstyle{definition}
 \newtheorem*{defn*}{\protect\definitionname}
\theoremstyle{plain}
\newtheorem{thm}{\protect\theoremname}
  \theoremstyle{plain}
  \newtheorem{lem}[thm]{\protect\lemmaname}
  \theoremstyle{definition}
  \newtheorem{defn}[thm]{\protect\definitionname}
  \theoremstyle{plain}
  \newtheorem{prop}[thm]{\protect\propositionname}
  \theoremstyle{plain}
  \newtheorem{cor}[thm]{\protect\corollaryname}

\makeatother

\usepackage{babel}
  \providecommand{\corollaryname}{Corollary}
  \providecommand{\definitionname}{Definition}
  \providecommand{\lemmaname}{Lemma}
  \providecommand{\propositionname}{Proposition}
  \providecommand{\theoremname}{Theorem}
\providecommand{\theoremname}{Theorem}

\begin{document}

\title{Normal Subgroups Of Powerful $p$-Groups}

\author{James Williams}
\begin{abstract}
In this note we show that if $p$ is an odd prime and $G$ is a powerful
$p$-group with $N\leq G^{p}$ and $N$ normal in $G$, then $N$
is powerfully nilpotent. An analogous result is proved for $p=2$
when $N\leq G^{4}$.
\end{abstract}

\maketitle

\section{Introduction}

Powerfully nilpotent groups, introduced in \cite{Traustason2018},
are a type of powerful $p$-group, possessing a central series of
a special kind. This family of groups have a rich and beautiful structure
theory. For example to each powerfully nilpotent group we can associate
a quantity known as the \emph{powerful coclass}, and it turns out
that the rank and exponent of a powerfully nilpotent group can be
bounded in terms of their powerful coclass. Many characteristic subgroups
of powerful $p$-groups are in fact powerfully nilpotent. In \textbf{\cite{Traustason2018}}
it is shown that if $G$ is powerful then the proper terms of the
derived and lower central series of $G$ are powerfully nilpotent,
as well as $G^{p^{i}}$ for $i\geq1$. In \textbf{\cite{Williams2018}}
the Omega subgroups of $G^{p^{i}}$ are shown to be powerfully nilpotent.
In this note we prove the following for odd primes $p$:
\begin{thm*}
For an odd prime $p$, if $G$ is a powerful $p$-group and $N\leq G^{p}$
with $N$ normal in $G$, then $N$ is powerfully nilpotent. 
\end{thm*}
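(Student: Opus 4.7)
The plan is to prove the theorem by exhibiting an explicit powerfully nilpotent series for $N$ built from its iterated $p$-th powers. First I would show that $N$ is itself a powerful $p$-group---in fact, powerfully embedded in $G$---and then verify that the descending $p$-power chain of $N$ is a powerfully nilpotent series in the sense of Traustason.

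The central tool I would invoke is the standard result from the theory of powerful $p$-groups (essentially due to Lubotzky--Mann): for odd $p$, if $G$ is powerful, $N$ is normal in $G$, and $N\leq G^{p}$, then $N$ is powerfully embedded in $G$, i.e.\ $[N,G]\leq N^{p}$. This follows from Hall--Petresco style commutator identities and the fact that $G^{p}$ is itself powerfully embedded in $G$. It gives in particular $[N,N]\leq[N,G]\leq N^{p}$, so $N$ is powerful.

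With this in hand, I would consider the chain
\[
N\;\geq\; N^{p}\;\geq\; N^{p^{2}}\;\geq\;\cdots\;\geq\; N^{p^{k}}=\{1\},
\]
which terminates because $N$ is a finite $p$-group, and in which $(N^{p^{i}})^{p}=N^{p^{i+1}}$ because $N$ is powerful (every element of $N^{p^i}$ is a genuine $p^i$-th power). Each $N^{p^{i}}$ is characteristic in $N$, hence normal in $G$, and lies in $G^{p}$ because $N$ does; so the key fact, applied with $N^{p^{i}}$ in place of $N$, yields
\[
[N^{p^{i}},G]\;\leq\;(N^{p^{i}})^{p}\;=\;N^{p^{i+1}}.
\]
In particular $[N^{p^{i}},N]\leq N^{p^{i+1}}=(N^{p^{i}})^{p}$, which simultaneously says that the chain is a central series in $N$ and that each term is powerfully embedded in $N$. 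Reversing the order, I obtain a powerfully nilpotent series $\{1\}=N^{p^{k}}\leq\cdots\leq N^{p}\leq N$ for $N$, as required.

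The step I expect to be the main obstacle is the key lemma that every $G$-normal subgroup of $G^{p}$ is powerfully embedded in $G$; the rest of the argument is a formal manipulation using only that $p$-th powers behave predictably in powerful groups. If the note is to be self-contained, most of the technical effort will go into a careful commutator calculation bounding $[N,G]$ modulo $N^{p}$, most naturally by induction on the nilpotency class of $G$ together with Hall--Petresco-type expansions of $[a^{p},b]$ in the powerful setting, where the hypothesis that $p$ is odd enters in the usual way.
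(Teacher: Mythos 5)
There is a genuine gap in your concluding step: you have misapplied the definition of a powerfully central series. Writing your chain in ascending order as $H_{i}=N^{p^{k-i}}$, the condition you must verify is $[H_{i},N]\leq H_{i-1}^{p}=\bigl(N^{p^{k-i+1}}\bigr)^{p}=N^{p^{k-i+2}}$; that is, you need $[N^{p^{j}},N]\leq N^{p^{j+2}}$, two steps down the $p$-power series. What you have established is only $[N^{p^{j}},N]\leq N^{p^{j+1}}$, which says that the series is central and that each term is powerfully embedded in $N$ --- and that combination does \emph{not} imply powerful nilpotence. Indeed, the lower $p$-series of \emph{any} powerful $p$-group is a central series whose terms are powerfully embedded, yet for odd $p$ there exist powerful $p$-groups that are not powerfully nilpotent (this is why the notion is not vacuous); since Theorem \ref{thm:normal subgroups of g contained in frattini are powerful} already guarantees that $N$ is powerful, your closing inference would make the theorem an immediate consequence of known results, which it is not. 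Concretely, at the top of your chain the missing inequality is $[N,N]\leq N^{p^{2}}$, and obtaining that (or a substitute for it) is where the entire content of the theorem lives.

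The second problem is that your route to the conclusion rests on the unproved claim that $N$ is powerfully embedded in $G$, i.e.\ $[N,G]\leq N^{p}$. This is strictly stronger than what the paper imports from the literature (Theorem \ref{thm:normal subgroups of g contained in frattini are powerful} gives only $[N,N]\leq N^{p}$), and an appeal to ``Hall--Petrescu style identities'' is not an argument; all of the difficulty has been pushed into this assertion. Note that if it \emph{were} available, the theorem would follow much more directly than by your chain argument: Shalev's lemma would give $[N,N]\leq[G^{p},N]=[N,G]^{p}\leq(N^{p})^{p}=N^{p^{2}}$, so $N/N^{p^{2}}$ is abelian and Proposition \ref{prop:G pn iff G/Gp2 is pn} finishes --- which only underlines that your key lemma is carrying the whole proof. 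The paper avoids this claim entirely: it inducts on $|G|$, reduces to exponent $p^{2}$ via Proposition \ref{prop:G pn iff G/Gp2 is pn}, uses Fern\'andez-Alcober's theorem on orders of commutators (Theorem \ref{thm:GustavosTheorem}) to produce a central element of order $p^{2}$ in $N$, and then passes to the quotient by $Z(N)^{p}$. None of these ingredients appear in your outline, so as written the proposal does not constitute a proof.
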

For $p=2$ we prove:
\begin{thm*}
If $G$ is a powerful $2$-group and $N\leq G^{4}$ with $N$ normal
in $G$, then $N$ is powerfully nilpotent.
\end{thm*}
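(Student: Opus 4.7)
The strategy for the $p=2$ statement parallels the approach one would use for odd $p$, with $G^{p}$ and $H^{p}$ systematically replaced by $G^{4}$ and $H^{4}$. The argument divides naturally into three parts.

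First, I would verify that $N$ is itself a powerful $2$-group, i.e.\ that $[N,N]\leq N^{4}$. Since $N\leq G^{4}$ and $G^{4}$ is powerful (being powerfully embedded in the powerful $2$-group $G$), we have $[N,N]\leq[G^{4},G^{4}]\leq(G^{4})^{4}=G^{16}$. To upgrade this ambient containment to the desired $[N,N]\leq N^{4}$, I would invoke Hall--Petrescu style commutator-power identities together with the normality of $N$ in $G$, expressing commutators in $N$ as products of fourth powers of elements of $N$ modulo deeper commutators that can be absorbed inductively.

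Next, I would construct a descending series $N=N_{0}\geq N_{1}\geq\cdots\geq N_{k}=1$ by setting $N_{i+1}=[N_{i},G]\cdot N_{i}^{4}$; reversing it produces a candidate chain witnessing the powerful nilpotence of $N$. Each $N_{i}$ is visibly normal in $G$, and hence in $N$, and the inclusion $[N_{i},N]\leq[N_{i},G]\leq N_{i+1}$ supplies the central series condition. That each $N_{i}$ is powerfully embedded in $N$, meaning $[N_{i},N]\leq N_{i}^{4}$, should follow by a parallel induction using the defining formula together with the commutator-power calculus available in the ambient powerful group. A subsidiary check is that the $N_{i}$ remain powerful $2$-groups at every stage so that the notation $N_{i}^{4}$ carries the correct structural meaning.

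The main obstacle is proving that the series terminates while simultaneously verifying the powerfully embedded condition. For odd $p$, the clean identity $[G,G^{p^{i}}]\leq G^{p^{i+1}}$ in powerful $p$-groups makes the inductive step essentially automatic: iterating bounds $N_{i}$ inside $G^{p^{i+c}}$, which is eventually trivial since $G$ is finite. For $p=2$, the corresponding identity $[G,G^{2^{i}}]\leq G^{2^{i+1}}$ typically holds only for $i\geq 2$, with extra losses of factors of $2$ near the bottom of the power series; this is precisely why the hypothesis $N\leq G^{4}$, rather than merely $N\leq G^{2}$, is essential. Careful bookkeeping of these powers of $2$, and of the interplay between the chain in $N$ and the power-commutator structure of $G$, is the technical heart of the $p=2$ adaptation.
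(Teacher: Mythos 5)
Your proposal leaves the real work undone and, more importantly, misses the one fact that makes the $p=2$ case short. For $p=2$ every powerful $2$-group is already powerfully nilpotent: since $[G^{2^{i}},G]\leq (G^{2^{i}})^{4}=G^{2^{i+2}}=(G^{2^{i+1}})^{2}$, the chain $1\leq\dots\leq G^{4}\leq G^{2}\leq G$ is powerfully central (this is the remark from \cite{Traustason2018} that the paper invokes). So the entire theorem reduces to showing that $N$ is powerful, i.e.\ $[N,N]\leq N^{4}$, and your second and third stages are unnecessary. They also would not work as written: the definition of a powerfully central chain requires $[H_{i},N]\leq H_{i-1}^{p}$, so your descending series would need $[N_{i},N]\leq N_{i+1}^{2}$, whereas the series $N_{i+1}=[N_{i},G]N_{i}^{4}$ only visibly satisfies $[N_{i},N]\leq N_{i+1}$ --- an ordinary central condition, not a powerfully central one. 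And the termination argument, which you yourself flag as ``the technical heart,'' is precisely the part you do not carry out.

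The first stage is also only a sketch. ``Hall--Petrescu style identities \dots absorbed inductively'' is not an argument, and note that $[N,N]\leq G^{16}$ says nothing about $N^{4}$. The paper's proof is a clean two-line application of Fern\'andez-Alcober's theorem: pass to $\bar{G}=G/N^{4}$, where every element of $\bar{N}=N/N^{4}$ has order at most $4$ and, since $\bar{N}\leq\bar{G}^{4}$, can be written as $g^{4}$ with $o(g)\leq 2^{4}$; then Theorem \ref{thm:GustavosTheorem} with $i=4$, $j=k=2$ gives $o([g^{4},h^{4}])\leq 2^{4-2-2}=1$, so $\bar{N}$ is abelian and $N$ is powerful. (Alternatively one could simply cite Theorem \ref{thm:normal subgroups of g contained in frattini are powerful}, which already asserts that such an $N$ is powerful.) If you want to salvage your outline, replace stage one by this argument and replace stages two and three by the observation that powerful $2$-groups are powerfully nilpotent.
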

This builds on the results in\textbf{ }\cite{GONZALEZSANCHEZ2004193}
, which show that such a group $N$ must be powerful and moreover
provides an alternative proof for this fact. We also note that in
\cite{article}, Mann poses the question: \emph{``Which $p$-groups
are subgroups of powerful $p$-groups?''}, and our result provides
a partial answer in this direction.

\section{Preliminaries}

In what follows all groups considered will be finite $p$-groups.
First we shall recall some definitions and properties which will be
used in the main part of this paper, with the aim of making this paper
as self contained as possible. Recall that $H^{n}=\langle x^{n}|x\in H\rangle$
is the subgroup generated by $n$th powers of elements of $H$. 
\begin{defn*}
A subgroup $N$ of a finite $p$-group $G$ is \emph{powerfully embedded}
in $G$ if $[N,G]\leq N^{p}$ for $p$ an odd prime, or $[N,G]\leq N^{4}$
if $p=2$.
\end{defn*}

\begin{defn*}
A finite $p$-group $G$ is \emph{powerful} if $[G,G]\leq G^{p}$
for $p$ an odd prime, or $[G,G]\leq G^{4}$ if $p=2$.
\end{defn*}
Powerful $p$-groups were introduced in \cite{Lubotzky1987}. We will
often make use of the following well known properties of powerful
$p$-groups without explicit mention.

\begin{thm}
Let $G$ be a powerful $p$-group, then
\begin{enumerate}
\item \cite[Proposition 1.7]{Lubotzky1987} for every $k\in\mathbb{N}$
the subgroup $G^{p^{k}}$ coincides with the set $\{x^{p^{k}}|x\in G\}$
of $p^{k}th$ powers of elements of $G$.
\item \cite[Corollary 1.2]{Lubotzky1987} $G^{p^{k}}$ is powerfully embedded
in $G$ for all $k\in\mathbb{N}.$
\item \cite[Corollary 1.9]{Lubotzky1987} Suppose that $G=\langle a_{1},\dots,a_{r}\rangle$
is generated by elements $a_{1},\dots,a_{r}$. Then $G^{p}=\langle a_{1}^{p},\dots,a_{r}^{p}\rangle$.
\end{enumerate}
\end{thm}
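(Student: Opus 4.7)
The common tool for all three parts will be the Hall--Petresco collection formula
\[
x^n y^n = (xy)^n\, c_2^{\binom{n}{2}} c_3^{\binom{n}{3}} \cdots c_n,
\]
with $c_i \in \gamma_i(\langle x, y\rangle)$, together with its consequences for commutators, notably an identity of the shape $[x^p, y] \equiv [x,y]^p$ modulo weight-$\geq 2$ commutators raised to binomial-coefficient powers. Because $G$ is powerful, $[G,G] \leq G^p$ (or $\leq G^4$ when $p=2$), so after substituting the powerful hypothesis these correction terms collapse into $G^{p^2}$. The main obstacle throughout is to handle $p=2$ uniformly, since $\binom{2}{2}$ is not divisible by $2$; this is precisely why the definition of powerful uses $G^4$ rather than $G^2$ for $p=2$, and the extra factor absorbs the offending term.

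For (1), I would induct on $k$. The inductive step reduces to the case $k=1$, that is, the claim that $\{x^p : x\in G\}$ is already a subgroup and hence equals $G^p$. The key calculation applies Hall--Petresco to $x^p y^p$: the commutator factors $c_i$ lie in $[G,G] \leq G^p$ by powerfulness, so by a subsidiary induction (on the order of $G$, say) they are themselves $p$th powers, and the $\binom{p}{i}$-type coefficients let one absorb everything into a single $p$th power $z^p$. Closing the set of $p$th powers under multiplication in this way produces a subgroup containing each generator $a_i^p$, hence containing $G^p$. For $k>1$, once (2) is known at the previous level, $G^{p^{k-1}}$ is itself powerful and the $k=1$ case applies to it.

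For (2), I would again induct on $k$, the base case being $[G^p, G] \leq G^{p^2}$. Expanding $[g^p, h]$ via the commutator-collection identity produces $[g,h]^p$ together with correction terms of the form $[g,h,g,\ldots,g]$ and $[g,h,\ldots,h]$ raised to coefficients divisible by $p$; powerfulness puts $[g,h] \in G^p$, and (1) applied inside the powerful group $G^p$ shows that each such term already lies in $G^{p^2}$. The inductive step replaces $G$ by the now-powerful $G^p$ and iterates. Finally, part (3) follows by combining (1) and (2): let $H = \langle a_1^p,\ldots,a_r^p\rangle$ and work in $\bar{G} = G/G^{p^2}$, where by (2) the subgroup $\bar{G}^p$ is central of exponent $p$, so that in $\bar{G}$ the $p$th-power map is additive on products (for odd $p$ because $\binom{p}{2}\equiv 0 \pmod p$; for $p=2$ the further commutator-absorption built into the $G^4$ definition is needed). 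Writing an arbitrary $g\in G$ as $a_1^{e_1}\cdots a_r^{e_r}\cdot z$ with $z\in[G,G]\leq G^p$, we see $\bar{g}^p = \bar{a}_1^{pe_1}\cdots \bar{a}_r^{pe_r}\cdot \bar{z}^p$ lies in the image of $H$ (using (1) inside $G^p$ to handle $\bar{z}^p \in \bar{G}^{p^2} = 1$). Thus $G^p \leq H\cdot G^{p^2}$, and a standard descent along the chain $G \geq G^p \geq G^{p^2} \geq \cdots$, which terminates by finiteness, forces $G^p = H$.
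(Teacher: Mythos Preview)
The paper does not prove this theorem at all: each of the three parts is simply quoted from Lubotzky--Mann with a pinpoint citation, and the statement appears in the preliminaries section purely as background to be invoked later. There is consequently no proof in the paper to compare yours against.

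For what it is worth, your outline is essentially the standard Lubotzky--Mann argument. One point that would need tightening in a full write-up is the mutual dependence between your treatments of (1) and (2). In (1) you want the Hall--Petresco correction terms, which lie in $[G,G]\le G^p$, to already be $p$th powers --- but that is precisely the statement being proved, and your ``subsidiary induction on $|G|$'' does not obviously apply since $G^p$ is not yet known to be powerful. In (2) you appeal to ``(1) applied inside the powerful group $G^p$'' to land in $G^{p^2}$, yet knowing $G^p$ is powerful is itself the $k=1$ case of (2). The usual resolution is to establish the $k=1$ case of (2) first in the form $[G^p,G]\le (G^p)^p$, with target $(G^p)^p$ rather than $G^{p^2}$ so that no appeal to (1) is required; this makes the $p$th-power map $G/G^p\to G^p/(G^p)^p$ a homomorphism, from which (1), (3), and the identification $(G^p)^p=G^{p^2}$ all follow, and the induction on $k$ then runs cleanly. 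With that reordering your sketch is sound.
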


We also make use of the following result
\begin{lem}
\cite[Lemma 3.1]{SHALEV1993271} If $M$ and $N$ are powerfully embedded
subgroups in a finite $p$-group $G$, then $[M^{p^{i}},N^{p^{j}}]=[M,N]^{p^{i+j}}$
for all $i,j\in\mathbb{N}$.
\end{lem}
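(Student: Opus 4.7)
The plan is to reduce the identity to the base case $[M^{p}, N] = [M, N]^{p}$ (where $M$ and $N$ are powerfully embedded in $G$) and then iterate. The reduction is by induction on $i+j$: assuming $i \geq 1$ without loss of generality, I would apply the base case with $M$ replaced by $M^{p^{i-1}}$ and $N$ replaced by $N^{p^{j}}$ to obtain
\[
[M^{p^{i}}, N^{p^{j}}] = [(M^{p^{i-1}})^{p}, N^{p^{j}}] = [M^{p^{i-1}}, N^{p^{j}}]^{p},
\]
and then apply the inductive hypothesis $[M^{p^{i-1}}, N^{p^{j}}] = [M, N]^{p^{i+j-1}}$. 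For this reduction step to be legitimate, one needs the auxiliary fact that $M^{p^{k}}$ is powerfully embedded in $G$ whenever $M$ is, which would be verified separately by a direct commutator argument, exploiting that powerful embedding of $M$ in $G$ forces $M$ itself to be powerful and hence $M^{p^{k}}$ to consist of $p^{k}$th powers of elements of $M$.

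The base case $[M^{p}, N] = [M, N]^{p}$ would be attacked via the Hall--Petrescu collection formula. For the inclusion $[M, N]^{p} \subseteq [M^{p}, N]$, expanding $[x, y]^{p}$ for $x \in M$, $y \in N$ yields $[x^{p}, y]$ times a product of iterated commutators of higher weight in $x$ and $y$, raised to binomial-coefficient powers. Using the fact that $[M, N]$ is itself powerfully embedded in $G$ (another standard consequence of commutator identities for powerfully embedded subgroups), these correction terms can be shown to lie in $[M, N]^{p^{2}}$ and thus absorbed. The reverse inclusion $[M^{p}, N] \subseteq [M, N]^{p}$ follows from the parallel expansion of $[x^{p}, y]$: the dominant term is $[x, y]^{p}$, while every other term is an iterated commutator of weight $\geq 2$ involving elements of $[M, N]$ with $M$ or $N$, hence lies in $[[M, N], G]$ and so in $[M, N]^{p}$ by the powerful embedding of $[M, N]$.

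The main obstacle will be the careful bookkeeping of the correction terms in the Hall--Petrescu collection: concretely, one must verify that every iterated commutator of weight $\ell$ appearing in the expansion lies in $[M, N]^{p^{\ell-1}}$ (or at least in a sufficiently high power of $[M, N]$ to be absorbed into the intended target), which is a standard ``dimension-subgroup'' style phenomenon in powerful $p$-groups but requires care at the prime $p=2$ where the basic powerful-embedding condition uses fourth powers rather than squares. Once this bookkeeping is settled both inclusions follow, and the outer induction on $i+j$ closes the argument.
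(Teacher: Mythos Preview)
The paper does not supply its own proof of this lemma; it is merely quoted from \cite[Lemma~3.1]{SHALEV1993271} as a preliminary fact, so there is nothing in the paper to compare against. Your outline---induction on $i+j$ to reduce to the base identity $[M^{p},N]=[M,N]^{p}$, that identity established via Hall--Petrescu collection together with the powerful embedding of $[M,N]$, and the auxiliary fact that $M^{p^{k}}$ remains powerfully embedded---is exactly the standard argument one finds in Shalev's paper and in the Dixon--du~Sautoy--Mann--Segal monograph.

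One point worth making precise: in the inclusion $[M,N]^{p}\subseteq[M^{p},N]$ you say the correction terms lie in $[M,N]^{p^{2}}$ and are ``absorbed.'' Since the target here is $[M^{p},N]$ rather than $[M,N]^{p}$, the absorption is not automatic; what the collection actually yields is $[M,N]^{p}\leq[M^{p},N]\cdot[M,N]^{p^{2}}$. Combined with the already-established reverse inclusion $[M^{p},N]\leq[M,N]^{p}$ and the fact that $[M,N]^{p^{2}}\leq\Phi\bigl([M,N]^{p}\bigr)$ (because $[M,N]$ is powerful), a Frattini-type non-generator argument then forces equality. This is presumably what you intended, but it should be stated explicitly rather than folded into the word ``absorbed.''
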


We keep with the convention of \cite{Fernandez-Alcober2007} that
if $G$ is a $p$-group, and $x\in G$, we define the meaning of the
inequality $o(x)\leq p^{i}$ with $i<0$ to be that $x=1$. In \cite{Fernandez-Alcober2007},
the following result is proved. This result forms an essential part
of our argument, and holds for all primes $p$.
\begin{thm}[Fern\'andez-Alcober]
\label{thm:GustavosTheorem}Let $G$ be a powerful $p$-group. Then,
for every $i\geq0$ if $x,y\in G$ are such that $o(x)\leq p^{i+1}$
and $o(y)\leq p^{i},$ then $o([x^{p^{j}},y^{p^{k}}])\leq p^{i-j-k}$
for all $j,k\geq0$.
\end{thm}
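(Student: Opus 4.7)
My plan is to proceed by strong induction on $i$. The base case $i=0$ is immediate: $o(y)\leq p^{0}=1$ forces $y=1$, so $[x^{p^{j}},y^{p^{k}}]=1$ for all $j,k\geq0$, and the conclusion $o(\,\cdot\,)\leq p^{-j-k}$ holds under the stated negative-exponent convention.

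For the inductive step, the first move is to reduce to the special case $j=k=0$. By Theorem~1(2), $G^{p^{j}}$ and $G^{p^{k}}$ are powerfully embedded in $G$, and by Theorem~1(1) we may write $x^{p^{j}}=u^{p^{j}}$ and $y^{p^{k}}=v^{p^{k}}$ for some $u,v\in G$. Lemma~2 then gives $[x^{p^{j}},y^{p^{k}}]\in[G^{p^{j}},G^{p^{k}}]=[G,G]^{p^{j+k}}$, so one wants to identify this commutator, up to corrections of sufficiently large $p$-power order, with $[u,v]^{p^{j+k}}$ and invoke the $j=k=0$ estimate applied inside appropriate subgroups.

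The core claim is therefore: $o([x,y])\leq p^{i}$ whenever $o(x)\leq p^{i+1}$ and $o(y)\leq p^{i}$. To attack this, I would start from the trivial identity $1=[x,y^{p^{i}}]$ and expand iteratively via $[a,bc]=[a,c][a,b]^{c}$. This produces a factorization of the shape
\[
1=[x,y]^{p^{i}}\cdot w,
\]
where $w$ is a product of conjugates of iterated commutators $[x,y,y,\ldots,y]$ of weight at least three in $y$. To control $w$, I would use that any such higher commutator lies in $[[G,G],G]\leq[G^{p},G]=[G,G]^{p}$ (Shalev's Lemma, applied to the powerfully embedded subgroups $G^{p}$ and $G$), and that iterating places commutators of weight $r+1$ in $y$ inside $[G,G]^{p^{r-1}}$. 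Combined with the inductive hypothesis applied to modified pairs such as $(x,y^{p})$ (for which $o(y^{p})\leq p^{i-1}$), this should bound $o(w)\leq p^{i}$ and yield the core claim.

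The main obstacle I anticipate is the bookkeeping in the commutator expansion: ensuring every higher-weight correction term fits the asymmetric hypothesis $o(x')\leq p^{i'+1}$, $o(y')\leq p^{i'}$ of the inductive statement, and that the $p$-adic valuations of the binomial coefficients $\binom{p^{i}}{r}$ conspire with the containment $[G,G]^{p^{s}}\supseteq\gamma_{s+2}(G)$ (or a suitable analogue) to absorb these terms. A cleanly chosen inductive hypothesis that directly tracks membership in $[G,G]^{p^{s}}$ rather than pointwise orders may be needed to make the induction genuinely self-contained.
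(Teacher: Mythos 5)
A preliminary remark: this theorem is not proved in the paper under review at all; it is imported from \cite{Fernandez-Alcober2007}, where it is the main technical result, so there is no in-paper argument to compare yours against. Judged on its own, your sketch contains gaps that are not just bookkeeping. The most structural one is the reduction to $j=k=0$. Shalev's lemma is an equality of \emph{subgroups}, $[G^{p^{j}},G^{p^{k}}]=[G,G]^{p^{j+k}}$; it gives you no way to identify the particular element $[x^{p^{j}},y^{p^{k}}]$ with $[u,v]^{p^{j+k}}$ for $u,v$ of controlled order, even approximately. Nor does the general case follow from the special case: applying your core claim directly to the pair $(x^{p^{j}},y^{p^{k}})$, whose orders are at most $p^{i+1-j}$ and $p^{i-k}$, only yields $o([x^{p^{j}},y^{p^{k}}])\leq p^{i-\min(j,k)}$, which is far weaker than $p^{i-j-k}$. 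The entire point of the theorem is that $j$ and $k$ both contribute to lowering the order, and that has to be carried through the induction rather than recovered at the end.

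The core step also does not close as described. From $1=[x,y^{p^{i}}]=[x,y]^{p^{i}}\cdot w$ you need $w=1$ to conclude $[x,y]^{p^{i}}=1$; establishing only $o(w)\leq p^{i}$ would give $o([x,y])\leq p^{2i}$, which is useless here. Moreover your inductive hypothesis does not apply to the auxiliary pairs you name: at level $i-1$ it requires $o(x)\leq p^{i}$, whereas you only know $o(x)\leq p^{i+1}$, and bounding $o([x,y^{p}])$ is precisely the case $k=1$ of the statement at level $i$, so invoking it there is circular. Two further points: membership of a correction term in $[G,G]^{p^{r-1}}$ says nothing about its order, which is what you would need in order to kill it against the $p$-adic valuation of $\binom{p^{i}}{r}$ (note $v_{p}\bigl(\tbinom{p^{i}}{p}\bigr)=i-1$, so already the term of weight $p$ in $y$ needs an order bound you do not have); and the clean expansion of $[x,y^{p^{i}}]$ into $[x,y]^{p^{i}}$ times iterated commutators with binomial exponents requires $\langle[x,y],y\rangle$ to have small nilpotency class, which you have not established --- the proof of Lemma \ref{lem:N must contain central element of order p^2} in this paper shows how much care even the class-$2$ instance of such an expansion demands. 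The result is genuinely delicate; I would consult \cite{Fernandez-Alcober2007} for a complete argument before attempting to streamline it.
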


The following result is proved in \cite{GONZALEZSANCHEZ2004193}.
\begin{thm}
\label{thm:normal subgroups of g contained in frattini are powerful}Let
$G$ be a powerful $p$-group.
\begin{enumerate}
\item Let $p$ be an odd prime. If $N\vartriangleleft G$ and $N\leq G^{p}$
then $N$ is powerful. 
\item Let $p=2$. If $N\vartriangleleft G$ and $N\leq G^{4}$ then $N$
is powerful.
\end{enumerate}
\end{thm}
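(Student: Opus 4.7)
The plan is to pass to the quotient $\bar{G}:=G/N^{p}$ (resp.\ $G/N^{4}$ when $p=2$), which reduces the claim to showing that $\bar{N}:=N/N^{p}$ (resp.\ $N/N^{4}$) is abelian, and then to read off this abelianness directly from the Fern\'andez-Alcober order bound.

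First I would justify the reduction. Since $N^{p}$ is characteristic in $N$ and $N\trianglelefteq G$, the subgroup $N^{p}$ is normal in $G$, so $\bar{G}$ makes sense; $\bar{G}$ is still powerful as a quotient of a powerful $p$-group, and $\bar{N}\trianglelefteq \bar{G}$ with $\bar{N}\leq \bar{G}^{p}$. The assertion $[N,N]\leq N^{p}$ is literally that $\bar{N}$ is abelian in $\bar{G}$, so proving the latter suffices.

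Next I would bound the order of suitable lifts. For every $\bar{n}\in\bar{N}$, $\bar{n}^{p}=\overline{n^{p}}=\bar{1}$, so $\bar{N}$ has exponent dividing $p$. By part (1) of Theorem~2.3 applied inside the powerful group $\bar{G}$, every element of $\bar{G}^{p}$ is an honest $p$-th power, so we may write $\bar{n}=\bar{x}^{p}$ for some $\bar{x}\in\bar{G}$. Then $\bar{x}^{p^{2}}=\bar{n}^{p}=\bar{1}$, giving $o(\bar{x})\leq p^{2}$.

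Finally I would invoke Theorem~\ref{thm:GustavosTheorem}. Given $\bar{n}_{1}=\bar{x}_{1}^{p}$ and $\bar{n}_{2}=\bar{x}_{2}^{p}$ with $o(\bar{x}_{\ell})\leq p^{2}$, apply the theorem inside $\bar{G}$ with $i=2$ and $j=k=1$: the hypotheses $o(\bar{x}_{1})\leq p^{i+1}=p^{3}$ and $o(\bar{x}_{2})\leq p^{i}=p^{2}$ are satisfied, and the conclusion gives $o([\bar{x}_{1}^{p},\bar{x}_{2}^{p}])\leq p^{i-j-k}=p^{0}=1$, so $[\bar{n}_{1},\bar{n}_{2}]=\bar{1}$. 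The $p=2$ case is identical with $\bar{G}=G/N^{4}$: every $\bar{n}\in\bar{N}$ satisfies $\bar{n}^{4}=\bar{1}$ and can be written as $\bar{x}^{4}$, forcing $o(\bar{x})\leq 2^{4}$; applying the theorem with $p=2$, $i=4$, $j=k=2$ yields $o([\bar{x}_{1}^{4},\bar{x}_{2}^{4}])\leq 2^{4-2-2}=1$, so $\bar{N}$ is abelian and $[N,N]\leq N^{4}$.

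The main obstacle is recognising that one should work modulo $N^{p}$ (resp.\ $N^{4}$); once that is done the proof is essentially a bookkeeping exercise in choosing the index $i$ in Theorem~\ref{thm:GustavosTheorem} so that the (asymmetric) order hypothesis is met for both factors, and checking that the decrement $i-j-k$ lands at $0$ after the extra $p$-th (resp.\ $4$-th) power is applied to each argument.
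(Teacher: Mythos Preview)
Your argument is correct. For $p=2$ it is exactly the alternative proof the paper writes out at the start of Section~3: pass to $G/N^{4}$, write each element of $\bar N\le\bar G^{4}$ as a fourth power of some $\bar g$ with $o(\bar g)\le 2^{4}$, and apply Theorem~\ref{thm:GustavosTheorem} with $i=4$, $j=k=2$.

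For odd $p$ the paper does not give its own proof of this statement; it is quoted from the Gonz\'alez-S\'anchez reference, and the paper's in-house route to ``$N$ is powerful'' runs through the much stronger powerfully-nilpotent theorem, which needs Lemmas~\ref{lem:In a group of exponent p^2 elements of order p in N are central}--\ref{lem:N must contain central element of order p^2} and an induction on $|G|$. Your direct argument---in effect Lemma~\ref{lem:In a group of exponent p^2 elements of order p in N are central} applied inside $G/N^{p}$, where every element of $\bar N$ already has order at most $p$, so that the choice $i=2$, $j=k=1$ in Theorem~\ref{thm:GustavosTheorem} kills every commutator---delivers the bare ``powerful'' conclusion in one stroke and is genuinely shorter. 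What it does not yield, of course, is the powerfully-nilpotent upgrade that is the paper's actual objective; for that one still needs the existence of a central element of order $p^{2}$ (Lemma~\ref{lem:N must contain central element of order p^2}) to make the induction go through.
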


For further details on powerful $p$-groups, see \cite{Dixon2003,Khukhro1998,Lubotzky1987}. 

In \textbf{\cite{Traustason2018}} the notion of a \emph{powerfully
nilpotent group }was introduced. 
\begin{defn}
Let $H\leq K\leq G$. An ascending chain of subgroups 
\[
H=H_{0}\leq H_{1}\leq\dots\leq H_{n}=K
\]
is \emph{powerfully central} if $[H_{i},G]\leq H_{i-1}^{p}$ for $i=1,\dots,n$.
A powerful $p$-group $G$ is \emph{powerfully nilpotent} if it has
a powerfully central ascending chain of subgroups of the form
\[
\{1\}=H_{0}\leq H_{1}\leq\dots\leq H_{n}=G.
\]

\end{defn}

A fastest ascending powerfully central series, named the \emph{upper
powerfully central series} is defined recursively as follows: $\hat{Z}_{0}(G)=\{1\}$
and for $n\geq1$ 
\[
\hat{Z}_{n}(G)=\{a\in G:[a,x]\in\hat{Z}_{n-1}(G)^{p}\text{ for all \ensuremath{x\in G}}\}.
\]
 Notice in particular that $\hat{Z}_{1}(G)=Z(G)$.

The following result, proved in \textbf{\cite[Section 1, Proposition 1.1]{Traustason2018}},
allows us to reduce the problem of whether or not a group is powerfully
nilpotent to looking at a group of exponent $p^{2}$. 
\begin{prop}
\label{prop:G pn iff G/Gp2 is pn}Let $G$ be any finite $p$-group,
then $G$ is powerfully nilpotent if and only if $G/G^{p^{2}}$ is
powerfully nilpotent. 
\end{prop}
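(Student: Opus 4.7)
The proposition is an ``iff,'' and I would address the two directions separately. The forward implication is routine: given a powerfully central series $\{1\}=H_0\le H_1\le\dots\le H_n=G$ for $G$, I would project to $G/G^{p^2}$. Powerfulness of $G$ passes to the quotient, and the image chain $\bar H_i:=H_iG^{p^2}/G^{p^2}$ satisfies $[\bar H_i,G/G^{p^2}]=[H_i,G]G^{p^2}/G^{p^2}\le H_{i-1}^pG^{p^2}/G^{p^2}=\bar H_{i-1}^{\,p}$. After deleting any repetitions this gives a powerfully central series for $G/G^{p^2}$.

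The reverse implication is the substantive direction. Assuming $G/G^{p^2}$ is powerfully nilpotent, it is in particular powerful, so $[G,G]\le G^p\cdot G^{p^2}=G^p$ (and analogously $G^4$ when $p=2$), i.e., $G$ is powerful. My plan is then to reduce to showing that $G=\hat Z_N(G)\,G^{p^2}$ for some $N$: since $G$ is powerful, $\Phi(G)=G^p\supseteq G^{p^2}$, so the Burnside basis theorem then forces $\hat Z_N(G)=G$, which, together with the powerfulness of $G$, exhibits $G$ as powerfully nilpotent via its upper powerfully central series.

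The heart of the plan is an induction on $i$ producing a function $f$ such that $\pi^{-1}(\hat Z_i(G/G^{p^2}))\subseteq\hat Z_{f(i)}(G)\,G^{p^2}$, where $\pi\colon G\to G/G^{p^2}$ is the projection. The base case $i=0$ holds trivially. For the step, given $a\in G$ with $\pi(a)\in\hat Z_i(G/G^{p^2})$, the defining commutator condition in the quotient rewrites as $[a,G]\le\hat Z_{i-1}(G/G^{p^2})^p\,G^{p^2}$, and the inductive hypothesis (combined with the observation that each $\hat Z_j(G)$ is powerfully embedded in $G$, as $[\hat Z_j,G]\le\hat Z_{j-1}^p\le\hat Z_j^p$) places the right-hand side inside $\hat Z_{f(i-1)}(G)^p\,G^{p^2}$. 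The goal is then to modify $a$ by an element of $G^{p^2}$ so that the $G^{p^2}$ error is absorbed into further upper powerfully central steps, producing a representative in $\hat Z_{f(i)}(G)$ for a suitably enlarged $f(i)$. Taking $i=n$ with $\hat Z_n(G/G^{p^2})=G/G^{p^2}$ then yields the desired equation $G=\hat Z_{f(n)}(G)\,G^{p^2}$.

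The main obstacle is controlling this absorption so that $f$ remains finite at each stage. To handle it I would exploit that $G^{p^2}$ is powerfully embedded in $G$ (Theorem 2.1(2)), Shalev's identity (Lemma 2.2) to commute $p$-th powers past commutators among powerfully embedded subgroups, and the Fern\'andez-Alcober order estimates (Theorem 2.3) to bound the orders of the error terms living in $G^{p^2}$ by sufficiently small powers of $p$ that they are swept into the upper powerfully central series of $G$ after boundedly many iterations. Keeping these bounds uniform across the induction, so that the correcting element of $G^{p^2}$ can always be found and $f$ truly terminates, is the delicate piece of the argument.
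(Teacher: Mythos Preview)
The paper does not actually prove this proposition; it is quoted from \cite[Section~1, Proposition~1.1]{Traustason2018}, so there is no in-paper argument to compare your proposal against.

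On the substance: your forward direction is fine (and is just the standard fact that quotients of powerfully nilpotent groups are powerfully nilpotent). The reverse direction, however, has a genuine gap precisely at the ``absorption'' step you flag as delicate. After the induction yields $[a,G]\le \hat Z_{f(i-1)}(G)^{p}\,G^{p^{2}}$, you need to deduce $a\in \hat Z_{f(i)}(G)\,G^{p^{2}}$ for some finite $f(i)$. That would follow if you already knew $G^{p^{2}}\le \hat Z_{M}(G)^{p}$ for some $M$; since $G$ is powerful and $G^{p^{2}}=\{g^{p}:g\in G^{p}\}$, this is equivalent to $G^{p}\le \hat Z_{M}(G)$, which is essentially the conclusion you are trying to reach. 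The tools you invoke to bridge this---Shalev's identity and Fern\'andez-Alcober's order estimates---bound orders of individual commutators, but membership in $\hat Z_{j}(G)$ is not governed by order alone, so it is unclear how those results would let you discard the $G^{p^{2}}$ term uniformly. Your final Frattini/Burnside step (from $\hat Z_{N}(G)\,G^{p^{2}}=G$ to $\hat Z_{N}(G)=G$) is correct, but it rests on the equality you have not established. A cleaner route for the reverse implication is an induction on $|G|$ (or on the exponent), reducing modulo a nontrivial characteristic subgroup contained in $G^{p^{2}}$, rather than trying to lift the upper powerfully central series term by term.
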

The following reduction is also needed. 
\begin{prop}
\label{prop:G pn iff G/Z(G)^p is pn}$G$ is powerfully nilpotent
if and only if $G/Z(G)^{p}$ is powerfully nilpotent.\end{prop}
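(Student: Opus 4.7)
The plan is to split the statement into its two directions and work with the upper powerfully central series $\hat{Z}_{n}(G)$ throughout. Since $Z(G)^{p}$ is characteristic in $G$ it is normal, so the quotient $\bar G := G/Z(G)^{p}$ makes sense; let $\pi\colon G \to \bar G$ be the projection.

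For the forward direction, I would show that powerful nilpotence passes to quotients. Given a powerfully central series $\{1\} = H_{0} \le H_{1} \le \cdots \le H_{n} = G$ of $G$, its images satisfy $[\pi(H_{i}), \bar G] = \pi([H_{i}, G]) \le \pi(H_{i-1}^{p}) = \pi(H_{i-1})^{p}$, while $\bar G$ is powerful as a homomorphic image of $G$; this handles $(\Rightarrow)$.

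For the reverse direction, assume $\bar G$ is powerfully nilpotent. First, $G$ itself is powerful: from $[\bar G, \bar G] \le \bar G^{p}$ we deduce $[G,G] \le G^{p} Z(G)^{p} \le G^{p}$. The key step is then the inductive claim
\[
\pi^{-1}\bigl(\hat{Z}_{n}(\bar G)\bigr) \le \hat{Z}_{n+1}(G) \qquad \text{for every } n \ge 0.
\]
The base case is simply $Z(G)^{p} \le Z(G) = \hat{Z}_{1}(G)$. For the inductive step, take $a$ in the preimage of $\hat{Z}_{n}(\bar G)$. For every $x \in G$ we have $[\bar a, \bar x] \in \hat{Z}_{n-1}(\bar G)^{p}$, and the identity $(H/Z(G)^{p})^{p} = H^{p} Z(G)^{p}/Z(G)^{p}$ lifts this to $[a,x] \in M^{p} Z(G)^{p}$ with $M := \pi^{-1}(\hat{Z}_{n-1}(\bar G))$. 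The inductive hypothesis gives $M \le \hat{Z}_{n}(G)$, hence $M^{p} \le \hat{Z}_{n}(G)^{p}$; combined with $Z(G)^{p} \le \hat{Z}_{n}(G)^{p}$ (valid for $n \ge 1$ because $Z(G) \le \hat{Z}_{1}(G) \le \hat{Z}_{n}(G)$), this yields $[a,x] \in \hat{Z}_{n}(G)^{p}$, whence $a \in \hat{Z}_{n+1}(G)$. Taking $\hat{Z}_{n}(\bar G) = \bar G$ then gives $\hat{Z}_{n+1}(G) = G$, so $G$ is powerfully nilpotent.

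The main obstacle is just careful bookkeeping between $p$-th power subgroups and preimages under $\pi$. The point that makes the inductive step close is the absorption $Z(G)^{p} \le \hat{Z}_{n}(G)^{p}$ for $n \ge 1$: without it one would only recover $[a,x] \in \hat{Z}_{n}(G)^{p} Z(G)^{p}$, which is not a priori contained in $\hat{Z}_{n}(G)^{p}$. The $p=2$ case follows by the same scheme with the conventions adjusted so that $Z(G)^{p}$ and $H^{p}$ are replaced by their $4$-th-power analogues in line with the definition of powerful for $p=2$.
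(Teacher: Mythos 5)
Your argument is correct and is essentially the paper's own: the paper's proof is the single line ``this follows immediately from the definition of the upper powerfully central series,'' and your induction $\pi^{-1}(\hat{Z}_{n}(\bar{G}))\le\hat{Z}_{n+1}(G)$, closed by the absorption $Z(G)^{p}\le\hat{Z}_{n}(G)^{p}$ for $n\ge1$, is exactly the verification being left to the reader. (Your closing aside about switching to fourth powers when $p=2$ is unnecessary, since the paper's definition of a powerfully central chain uses $H_{i-1}^{p}$ for all primes, but this does not affect the argument.)
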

\begin{proof}
This follows immediately from the definition of the upper powerfully
central series.
\end{proof}
\begin{comment}
We also need the following well known property of the Frattini subgroup,
due to Gaschütz, \cite[Satz 2]{Gaschütz1953}.
\begin{prop}
\label{prop:Gaschutz, Phi(G/N)=00003DPhi(G)/N}Let $G$ be a group
and $N$ a normal subgroup of $G$ such that $N\leq\Phi(G)$, then
$\Phi(G/N)=\Phi(G)/N$.\end{prop}
\end{comment}

\section{Main Result}

We begin this section with a remark on the prime $p=2$. It was shown
in \textbf{\cite{Traustason2018}}, that any powerful $2$-group is
powerfully nilpotent. By Theorem \ref{thm:normal subgroups of g contained in frattini are powerful}
any normal subgroup of $G$ contained within $G^{4}$ is powerful
and thus will be powerfully nilpotent. However we give an alternate
proof of the fact that this group is powerful, for $p=2$ below, using
ideas in line with the rest of this paper.
\begin{thm}
Let $G$ be a powerful $2$-group and $N\leq G^{4}$ with $N$ normal
in $G$. Then $N$ is powerfully nilpotent.\end{thm}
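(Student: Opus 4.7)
My plan is to exploit Proposition \ref{prop:G pn iff G/Gp2 is pn} to reduce $N$ to the quotient $N/N^{p^{2}}$, and then to apply Fern\'andez-Alcober's bound (Theorem \ref{thm:GustavosTheorem}) to force every commutator in that quotient to be trivial. The driving observation is that the hypothesis $N\le G^{p^{2}}$ (as opposed to the merely $N\le G^{p}$ hypothesis of the odd-prime case) provides one extra power of~$p$, and this extra power is precisely what makes the resulting order estimate collapse all the way to~$1$.

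First I would apply Proposition \ref{prop:G pn iff G/Gp2 is pn} with $N$ in the role of $G$: since that proposition is stated for every finite $p$-group (not just powerful ones), $N$ is powerfully nilpotent if and only if $N/N^{4}$ is. Passing to $\bar{G}:=G/N^{4}$ (legitimate because $N^{4}$ is characteristic in $N$ and hence normal in~$G$), I now have a powerful $2$-group $\bar{G}$, a normal subgroup $\bar{N}\vartriangleleft\bar{G}$ with $\bar{N}\le\bar{G}^{4}$, and the sharpened constraint $\bar{N}^{4}=1$, so every element of $\bar{N}$ has order dividing~$4$. For arbitrary $\bar{a},\bar{b}\in\bar{N}$ the identity $\bar{G}^{4}=\{\bar{g}^{4}:\bar{g}\in\bar{G}\}$, available in the powerful group $\bar{G}$, lets me write $\bar{a}=\bar{u}^{4}$ and $\bar{b}=\bar{v}^{4}$ for some $\bar{u},\bar{v}\in\bar{G}$; and since $\bar{a}^{4}=\bar{b}^{4}=1$ I obtain $o(\bar{u}),o(\bar{v})\le 2^{4}$.

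Theorem \ref{thm:GustavosTheorem} applied in $\bar{G}$ with $i=4$ and $j=k=2$ then yields
\[
o\bigl([\bar{u}^{4},\bar{v}^{4}]\bigr)\le 2^{\,4-2-2}=1,
\]
so $[\bar{a},\bar{b}]=1$. Hence $\bar{N}$ is abelian and therefore trivially powerfully nilpotent: the chain $\{1\}\le\bar{N}$ is powerfully central since $[\bar{N},\bar{N}]=1$. Proposition \ref{prop:G pn iff G/Gp2 is pn} then returns the full conclusion for $N$, and unwinding the quotient simultaneously gives $[N,N]\le N^{4}$, which is the direct proof of powerfulness the author promised in the paragraph preceding the theorem. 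The main things to watch for are (i)~a potential circularity, since Proposition \ref{prop:G pn iff G/Gp2 is pn} is applied to $N$ before $N$ is known to be powerful --- this is harmless because the proposition is stated for an arbitrary finite $p$-group; and (ii)~the asymmetric shape of Fern\'andez-Alcober, which requires $o(x)\le p^{i+1}$ on one side but only $o(y)\le p^{i}$ on the other, so the symmetric data $o(\bar{u})=o(\bar{v})\le 2^{4}$ forces the tight choice $i=4$ and the arithmetic $p^{i-j-k}=p^{0}=1$ just barely closes up; it is exactly this arithmetic that fails for the odd-prime analogue with $N\le G^{p}$, where one only reaches $p^{i-j-k}=p$.
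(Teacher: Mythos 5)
Your proposal is correct and follows essentially the same route as the paper: pass to $\bar{G}=G/N^{4}$, write elements of $\bar{N}$ as fourth powers of elements of $\bar{G}$ of order at most $2^{4}$, and apply Theorem \ref{thm:GustavosTheorem} with $i=4$, $j=k=2$ to conclude that $\bar{N}$ is abelian. The only (harmless) difference is at the end: the paper deduces that $N$ is powerful and then cites the fact that powerful $2$-groups are powerfully nilpotent, whereas you close the argument via Proposition \ref{prop:G pn iff G/Gp2 is pn} applied to the abelian quotient, which works equally well since that proposition holds for arbitrary finite $p$-groups.
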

\begin{proof}
It suffices to show that $\frac{N}{N^{4}}$ is abelian. Let $\bar{N}=\frac{N}{N^{4}}$
and $\bar{G}=\frac{G}{N^{4}}$. Notice that $\bar{G}$ is powerful.
Consider two elements $a,b\in\bar{N}$ and observe that the order
of both elements is at most $4$. Notice also that $\bar{N}\leq\bar{G}^{4}$,
so we may assume $a=g^{2^{2}}$ and $b=h^{2^{2}}$ for some $g,h\in\bar{G}$
as the group $\bar{G}$ is powerful. Thus we must have $o(g)\leq2^{4}$
and $o(h)\leq2^{4}$. Then $[a,b]=[g^{2^{2}},h^{2^{2}}]=1$ by Theorem
\ref{thm:GustavosTheorem}, setting $x=g$, $y=h$ and $i=4$. It
follows that $\bar{N}$ is abelian and so $N$ is a powerful $2$-group.
Then by the remark in \textbf{\cite[page 81]{Traustason2018}}, we
know that powerful $2$-groups are powerfully nilpotent and thus\textbf{
}we have that $N$ is powerfully nilpotent.
\end{proof}
In what follows let $p$ be an odd prime.
\begin{lem}
\label{lem:In a group of exponent p^2 elements of order p in N are central}If
$G$ is powerful, $N\leq G^{p}$ and $N$ has exponent at most $p^{2}$
then elements of order $p$ in $N$ are central in $N$. 
\end{lem}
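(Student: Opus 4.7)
The plan is to show directly that for any $a \in N$ with $o(a)=p$ and any $b \in N$, one has $[a,b]=1$, and the whole proof should reduce to a clean application of Theorem \ref{thm:GustavosTheorem}.

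First I would use Theorem 1(1): since $G$ is a powerful $p$-group with $p$ odd, every element of $G^p$ is a $p$-th power of an element of $G$. Because $N \leq G^p$, I can write $a = g^p$ and $b = h^p$ for some $g,h \in G$. The hypothesis $o(a) = p$ forces $g^{p^2} = 1$, so $o(g) \leq p^2$. Since $N$ has exponent at most $p^2$, we have $b^{p^2}=1$, and hence $h^{p^3} = 1$, giving $o(h) \leq p^3$.

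Then I would apply Theorem \ref{thm:GustavosTheorem} with $x = h$, $y = g$, $i = 2$ and $j = k = 1$. The required inequalities $o(x) \leq p^{i+1} = p^3$ and $o(y) \leq p^i = p^2$ are exactly what was just established, and the conclusion $o([x^{p^j}, y^{p^k}]) \leq p^{i-j-k} = p^0$ translates into $[h^p, g^p] = 1$, that is $[b,a] = 1$. Since $a$ then commutes with every element of $N$, it is central in $N$.

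There is no real obstacle: the content of the lemma lies entirely in choosing the right parameters for Fern\'andez-Alcober's inequality. The only thing to notice is that the two extra powers of $p$ we gain by writing $a,b \in N \leq G^p$ as $p$-th powers of elements of $G$ are exactly what is needed to push $i - j - k$ down to $0$, forcing the commutator to be trivial.
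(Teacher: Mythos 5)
Your proof is correct and is essentially the paper's own argument: both write the two elements of $N$ as $p$-th powers $g^p, h^p$ using the fact that $G^p$ consists of $p$-th powers in a powerful group, deduce $o(g)\leq p^2$ and $o(h)\leq p^3$ from the order hypotheses, and apply Theorem \ref{thm:GustavosTheorem} with $i=2$, $x=h$, $y=g$ (and $j=k=1$) to kill the commutator. The only difference is that you spell out the choice of $j,k$ and the order bookkeeping explicitly, which the paper leaves implicit.
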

An equivalent result was proved in \cite{Williams2018}, but the short
proof is included here for completeness.
\begin{proof}
This follows from Theorem \ref{thm:GustavosTheorem} above. Indeed
if $g^{p},h^{p}\in N$ and $o(g)\leq p^{2}$ and $o(h)\leq p^{3}$
then in Theorem \ref{thm:GustavosTheorem}, take $i=2$, $x=h$ and
$y=g$.\end{proof}
\begin{lem}
\label{lem:intersection gp2 and N is central in N}Let $G$ be a powerful
$p$-group and $N\leq G^{p}$ and suppose $N$ has exponent $p^{2}$.
Then any element in $G^{p^{2}}\cap N$ is central in $N$. \end{lem}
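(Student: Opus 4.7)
The plan is to reduce the commutator $[a,b]$ (for $a \in G^{p^2}\cap N$ and arbitrary $b \in N$) to a commutator between a $p^2$-th power and a $p$-th power of two genuine elements of $G$, and then apply Theorem \ref{thm:GustavosTheorem} with well-chosen parameters. The whole argument hinges on the observation from Theorem 1.1(1) that in a powerful $p$-group every element of $G^{p^k}$ is actually a $p^k$-th power of a single element, not merely a product of such powers.

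First I would pick $a \in G^{p^2} \cap N$ and $b \in N$, and use Theorem 1.1(1) to write $a = g^{p^2}$ with $g \in G$, and $b = h^{p}$ with $h \in G$ (the latter is possible because $N \leq G^p$). Next I would read off the orders of $g$ and $h$ from the exponent hypothesis on $N$: since $a,b \in N$ have order at most $p^2$, the equalities $g^{p^4} = a^{p^2} = 1$ and $h^{p^3} = b^{p^2} = 1$ give $o(g) \leq p^4$ and $o(h) \leq p^3$.

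Now apply Theorem \ref{thm:GustavosTheorem} with $i = 3$, taking the $x$ of that theorem to be $g$ (which has order at most $p^{i+1} = p^4$) and the $y$ to be $h$ (of order at most $p^i = p^3$). Setting $j = 2$ and $k = 1$ yields
\[
o([a,b]) \;=\; o\bigl([g^{p^{2}}, h^{p}]\bigr) \;\leq\; p^{\,i-j-k} \;=\; p^{0},
\]
so by the convention recalled before Theorem \ref{thm:GustavosTheorem} we conclude $[a,b] = 1$. Since $b \in N$ was arbitrary, $a \in Z(N)$, proving the claim.

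There is no real obstacle; the entire proof is bookkeeping to align the exponent hypothesis on $N$ with the inequality $o(x) \leq p^{i+1}$, $o(y) \leq p^i$ required by Fern\'andez-Alcober. The delicate point to double-check is that the two extra $p$'s available in $N$'s exponent give exactly enough room to absorb the $p^2$ and $p$ appearing as $j$ and $k$, so that $i - j - k$ lands at $0$ rather than becoming positive; this is why the statement requires $N \leq G^p$ (and not just $N \leq G$) combined with exponent $p^2$.
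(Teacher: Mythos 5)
Your proof is correct and follows essentially the same route as the paper: write the two elements as $g^{p^{2}}$ and $h^{p}$, bound $o(g)\leq p^{4}$ and $o(h)\leq p^{3}$ from the exponent of $N$, and apply Theorem \ref{thm:GustavosTheorem} with $i=3$, $j=2$, $k=1$. The only (harmless) difference is that the paper first invokes Lemma \ref{lem:In a group of exponent p^2 elements of order p in N are central} to reduce to elements of order exactly $p^{2}$, a step your use of the inequalities $o(x)\leq p^{i+1}$, $o(y)\leq p^{i}$ renders unnecessary.
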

\begin{proof}
By Lemma \ref{lem:In a group of exponent p^2 elements of order p in N are central}
above, we know we only need to consider commutators between elements
of order $p^{2}.$ Suppose $n_{1}\in N\cap G^{p^{2}}$ and $n_{2}\in N\leq G^{p}$
are both of order $p^{2}$. Then we may write $n_{1}=g^{p^{2}}$ and
$n_{2}=h^{p}$ for some $g,h\in G$, where $o(g)=p^{4}$ and $o(h)=p^{3}$.
Then by Theorem \ref{thm:GustavosTheorem} above, we can set $x=g$,
and $y=h$ and $i=3$ and we see that $[x^{p^{2}},y^{p}]=1$.
\end{proof}

We now prove the following crucial lemma.
\begin{lem}
\label{lem:N must contain central element of order p^2}Let $G$ be
a powerful $p$-group and let $N\leq G^{p}$ where $N$ is normal
in $G$ and the exponent of $N$ is $p^{2}$. Then $N$ contains an
element of order $p^{2}$ which is central in $N$. \end{lem}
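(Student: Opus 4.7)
The plan is to exhibit an element of order $p^{2}$ inside $G^{p^{2}}\cap N$, since by Lemma~\ref{lem:intersection gp2 and N is central in N} any such element is automatically central in $N$. First I would record the structure of $N$: since $N$ is powerful by Theorem~\ref{thm:normal subgroups of g contained in frattini are powerful}, $[N,N]\le N^{p}$, and since $N\le G^{p}$ we have $N^{p}\le(G^{p})^{p}=G^{p^{2}}$, so by Lemma~\ref{lem:intersection gp2 and N is central in N}, $N^{p}\le Z(N)$. Together with Lemma~\ref{lem:In a group of exponent p^2 elements of order p in N are central} (so $\Omega_{1}(N)\le Z(N)$), this shows $N$ has nilpotency class at most $2$, and for $p$ odd the $p$-th power map $N\to N^{p}$ is a group homomorphism with kernel $\Omega_{1}(N)$.

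Since $\exp N=p^{2}$, pick $n\in N$ with $o(n)=p^{2}$ and, via part (1) of Theorem~\ref{thm:GustavosTheorem}, write $n=g^{p}$ for some $g\in G$, necessarily of order $p^{3}$. The naive candidate $n^{p}=g^{p^{2}}$ lies in $G^{p^{2}}\cap N$ but has order only $p$, so what I actually need is an $h\in G$ of order $p^{4}$ with $h^{p^{2}}\in N$; then $h^{p^{2}}$ will have order $p^{2}$ and be central in $N$ by Lemma~\ref{lem:intersection gp2 and N is central in N}.

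The argument then splits on $\exp(G)$. First, $\exp(G)\ge p^{3}$, for otherwise $G^{p}$ would have exponent at most $p$, contradicting $\exp N=p^{2}$. If $\exp(G)=p^{3}$, then $[G,G]\le G^{p}$ has exponent at most $p^{2}$, and by the Shalev identity $[G^{p},G^{p}]=[G,G]^{p^{2}}$ from \cite[Lemma 3.1]{SHALEV1993271}, we get $[G^{p},G^{p}]=1$; hence $G^{p}$ is abelian, so $N$ is abelian, and $Z(N)=N$ contains any order-$p^{2}$ element of $N$.

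The main obstacle is the case $\exp(G)\ge p^{4}$: elements of order $p^{4}$ now exist in $G$, but their $p^{2}$-th powers need not a priori lie in $N$. Here I expect one must exploit the powerful structure of $G^{p}$ (itself a powerful $p$-group containing $N$) together with the normality of $N$ in $G$, and apply Theorem~\ref{thm:GustavosTheorem} to commutators $[h,k]$ with $h,k\in G$ of prescribed orders, in order to force the constructed $p^{2}$-th power into $N$. Ensuring the element lands in $N$ rather than merely in $G^{p^{2}}$ is, I expect, the most delicate step; it is likely where one uses that every element of $N$, as an element of the powerful group $G^{p}$, is a $p$-th power there, so that the preimages under the $p$-th power map on $G^{p}$ provide candidates for $h$.
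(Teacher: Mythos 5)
Your reduction to finding an order-$p^{2}$ element of $N\cap G^{p^{2}}$ is the right first move, and your preliminary observations ($N^{p}\le Z(N)$, $\Omega_{1}(N)\le Z(N)$, class at most $2$) as well as the disposal of the case $\exp(G)=p^{3}$ via $[G^{p},G^{p}]=[G,G]^{p^{2}}\le G^{p^{3}}=1$ are all correct. But the proof has a genuine gap: the entire case in which $N$ is nonabelian (equivalently, the case you label $\exp(G)\ge p^{4}$) is left as a hope rather than an argument. That case is the whole content of the lemma --- your structural facts alone cannot close it, since a nonabelian group with $N^{p}\le Z(N)=\Omega_{1}(N)$ and $N/Z(N)$ elementary abelian (e.g.\ an extraspecial-like configuration) is not ruled out without using the ambient group $G$. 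Moreover, you have misidentified where the difficulty lies: producing an element of $N\cap G^{p^{2}}$ is free, because any commutator of an element of $G$ with an element of $N$ lies in $N$ by normality and in $[G,G^{p}]\le G^{p^{2}}$ by powerfulness; the delicate point is pinning down its \emph{order}.

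The missing idea is the following. If $N$ is nonabelian, take non-commuting elements $n_{1}=a^{p}$ and $n_{2}=b^{p}$ of order $p^{2}$ with $o(a)=o(b)=p^{3}$ (by Lemma \ref{lem:intersection gp2 and N is central in N} one may assume $n_{1},n_{2}\notin G^{p^{2}}$), and consider $c=[b,a^{p}]\in N\cap G^{p^{2}}$. One shows that $\langle b,c\rangle$ has class at most $2$ with $[c,b]$ of order at most $p$ (using Theorem \ref{thm:GustavosTheorem} and Lemma \ref{lem:In a group of exponent p^2 elements of order p in N are central}), whence the collection identity $[b^{p},a^{p}]=c^{p}\,[c,b]^{\binom{p}{2}}=c^{p}$ for odd $p$. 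Since $[b^{p},a^{p}]\neq1$, this forces $o(c)=p^{2}$, and Lemma \ref{lem:intersection gp2 and N is central in N} makes $c$ central in $N$. Without this (or an equivalent) argument your proof establishes the lemma only when $N$ is abelian, where it is trivial.
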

\begin{proof}
First observe that by Lemma \ref{lem:In a group of exponent p^2 elements of order p in N are central},
we can assume that $N$ contains at least two elements of order $p^{2}$
which do not commute with each other, or else $N$ would be abelian
and the result is trivial. Suppose these elements are $n_{1}$ and
$n_{2}$. By Lemma \ref{lem:intersection gp2 and N is central in N}
we can assume that these are in $G^{p}\backslash G^{p^{2}}$. They
are $p$th powers, since $G$ is powerful, so we may write $n_{1}=a^{p}$
and $n_{2}=b^{p}$ for some $a,b\in G$ where $o(a)=p^{3}=o(b)$.
Now notice that $[b,a^{p}]\in N$ since $N$ is normal in $G$ and
also observe that since $G$ is powerful, we have that $[b,a^{p}]\in[G,G^{p}]\leq G^{p^{2}}$.
Hence $[b,a^{p}]\in N\cap G^{p^{2}}$. We seek to show that this element
has order $p^{2}$. Then the result will follow by Lemma \ref{lem:intersection gp2 and N is central in N}.
Thus we argue by contradiction supposing that $[b,a^{p}]$ is of order
$p$.

We will now show that $[b,a^{p}]^{p}=[b^{p},a^{p}]$. We first show
that the subgroup $M=\langle b,[b,a^{p}]\rangle$ is of class at most
$2$. Notice that $[b,a^{p}]\in G^{p^{2}}$ and so we let $[b,a^{p}]=g^{p^{2}}$.
Furthermore, since $a^{p}\in N$ and $N$ is normal in $G$ we have
that $[b,a^{p}]\in N$ and since the exponent of $N$ is $p^{2}$
we can conclude that the order of $g$ is at most $p^{4}$. Using
Theorem \ref{thm:GustavosTheorem} we see that $o([g^{p^{2}},b])\leq p^{3-2}=p$.
Thus since $[g^{p^{2}},b]\in G^{p^{3}}$we can let $[g^{p^{2}},b]=h^{p^{3}}$
where $o(h)\leq p^{4}$. 

Next notice that $[b,a^{p}]\in N$ and $h^{p^{3}}=[[b,a^{p}],b]\in N$
are both of order at most $p$, then by Lemma \ref{lem:In a group of exponent p^2 elements of order p in N are central}
they commute with each other. Finally we consider $[h^{p^{3}},b]=[[[b,a^{p}],b],b]$.
As $o(h)\leq p^{4}$ and $o(b)\leq p^{3}$, again appealing to Theorem
\ref{thm:GustavosTheorem} we see that this commutator is trivial.
Hence we have that $[[[b,a^{p}],b],[b,a^{p}]]$ and $[[[b,a^{p}],b],b]$
are both trivial. We can then conclude that $M$ has class at most
$2$ and $M^{\prime}=\langle[[b,a^{p}],b]\rangle$ is cyclic of order
at most $p$. 

Since $M$ has class $2$ then $[b^{p},a^{p}]=[b,a^{p}]^{p}\cdot[b,a^{p},b]^{\binom{p}{2}}$
and as $p$ is odd we have $p\mid\binom{p}{2}$ and thus it follows
that $[b,a^{p}]^{p}=[b^{p},a^{p}]$. Now as we assumed that $a^{p}$
and $b^{p}$ did not commute, this means that $o([b^{p},a^{p}])\geq p$.
Hence we see that we must have $o([b,a^{p}])=p^{2}$, but then $[b,a^{p}]=g^{p^{2}}$
is an element of order $p^{2}$ in $N\cap G^{p^{2}}$ and so must
be central in $N$ by Lemma \ref{lem:intersection gp2 and N is central in N}. 
\end{proof}
We can now prove the main result.
\begin{thm}
Let $G$ be a powerful $p$-group with $p$ an odd prime. Let $N\leq G^{p}$
and $N\vartriangleleft G$. Then $N$ is powerfully nilpotent. \end{thm}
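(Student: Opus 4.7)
The plan is to induct on $|N|$, using the crucial Lemma~\ref{lem:N must contain central element of order p^2} to peel off a nontrivial central subgroup of $p$-th powers at each step, and then to assemble the conclusion via the two reduction Propositions~\ref{prop:G pn iff G/Gp2 is pn} and~\ref{prop:G pn iff G/Z(G)^p is pn}.

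First I would reduce to the case where $N$ has exponent at most $p^{2}$. The subgroup $N^{p^{2}}$ is characteristic in $N$ and hence normal in $G$. Passing to $\bar{G}=G/N^{p^{2}}$ and $\bar{N}=N/N^{p^{2}}$, the group $\bar{G}$ is still powerful, $\bar{N}$ is normal in $\bar{G}$, $\bar{N}\leq\bar{G}^{p}$, and the exponent of $\bar{N}$ is at most $p^{2}$. Since Proposition~\ref{prop:G pn iff G/Gp2 is pn} tells us $N$ is powerfully nilpotent if and only if $\bar{N}$ is, we may assume from the outset that $N$ itself has exponent at most $p^{2}$.

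With this reduction in hand I would induct on $|N|$. The trivial group is vacuously powerfully nilpotent, and if $N$ has exponent at most $p$ then Lemma~\ref{lem:In a group of exponent p^2 elements of order p in N are central} forces every non-identity element of $N$ to be central in $N$, so $N$ is abelian and trivially powerfully nilpotent via the chain $\{1\}\leq N$. In the main case $N$ has exponent exactly $p^{2}$, and Lemma~\ref{lem:N must contain central element of order p^2} supplies an element $z\in Z(N)$ of order $p^{2}$. Setting $K=Z(N)^{p}$, we have $z^{p}\in K\setminus\{1\}$, and $K$ is normal in $G$ because $Z(N)$ is characteristic in the normal subgroup $N$. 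Passing from $(G,N)$ to $(G/K,N/K)$ preserves every hypothesis while strictly shrinking $N$, so by the inductive hypothesis $N/K$ is powerfully nilpotent. Since $N$ is powerful by Theorem~\ref{thm:normal subgroups of g contained in frattini are powerful}, Proposition~\ref{prop:G pn iff G/Z(G)^p is pn} applied to $N$ now yields that $N$ itself is powerfully nilpotent.

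The main obstacle has already been overcome in Lemma~\ref{lem:N must contain central element of order p^2}: once we know that the exponent-$p^{2}$ hypothesis forces $Z(N)$ to contain an element of order $p^{2}$, the remaining argument is essentially mechanical, amounting to checking that the two reduction propositions and the abelian base case interlock correctly.
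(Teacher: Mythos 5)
Your proof is correct and takes essentially the same route as the paper's: the same two reduction propositions, the same appeal to Lemma \ref{lem:N must contain central element of order p^2} to produce a nontrivial central subgroup $Z(N)^{p}$, and the same final application of Proposition \ref{prop:G pn iff G/Z(G)^p is pn}. The only differences are cosmetic: you induct on $|N|$ and perform the exponent-$p^{2}$ reduction once at the outset, whereas the paper inducts on $|G|$ and folds that reduction into the inductive step.
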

\begin{proof}
The proof is by induction on the order of $G$. The result is clearly
true for $|G|\leq p^{2}$. Suppose now that $|G|=p^{k}$ with $k>2$
and that the claim holds for all smaller orders. Consider a normal
subgroup $N$ in $G$ such that $N\leq G^{p}$. We will show that
$N$ is powerfully nilpotent. Notice that $N^{p^{2}}$ is characteristic
in $N$ and so is normal in $G$. Therefore $\frac{N}{N^{p^{2}}}\leq\left(\frac{G}{N^{p^{2}}}\right)^{p}$
and $\frac{N}{N^{p^{2}}}$ is normal in $\frac{G}{N^{p^{2}}}$ . If
$|N^{p^{2}}|>1$ then by the inductive hypothesis and Proposition
\ref{prop:G pn iff G/Gp2 is pn} it would follow that $N$ is powerfully
nilpotent. Thus we may assume that $|N^{p^{2}}|=1$. We remark also
that we may assume the exponent of $N$ is precisely $p^{2}$, for
otherwise $N$ would be abelian by Lemma \ref{lem:In a group of exponent p^2 elements of order p in N are central}. 

Next notice that $Z(N)^{p}$ is characteristic in $N$ and so is normal
in $G$ and is contained in $G^{p}$. By Lemma \ref{lem:N must contain central element of order p^2}
we know that $|Z(N)^{p}|\geq p$. Then observe that $\frac{N}{Z(N)^{p}}\leq\left(\frac{G}{Z(N)^{p}}\right)^{p}$
and $\frac{N}{Z(N)^{p}}\vartriangleleft\frac{G}{Z(N)^{p}}$ and as
$\frac{G}{Z(N)^{p}}$ is a group of strictly smaller order, it follows
by the induction hypothesis that $\frac{N}{Z(N)^{p}}$ is powerfully
nilpotent. Then by Proposition \ref{prop:G pn iff G/Z(G)^p is pn}
it follows that $N$ is powerfully nilpotent.
\end{proof}
The next corollary is immediate and shows that powerfully nilpotent
groups are abundant within powerful $p$-groups.
\begin{cor}
Let $G$ be a powerful $p$-group and $i\in\mathbb{N}$. If $N$ is
normal in $G^{p^{i}}$ and contained within $G^{p^{i+1}}$, then $N$
is powerfully nilpotent.
\end{cor}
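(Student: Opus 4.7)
The plan is to derive the corollary directly from the main theorem by substituting $H := G^{p^{i}}$ for $G$ while keeping $N$ unchanged. Two hypotheses must then be checked in order to apply the main theorem to the pair $(H,N)$: that $H$ is itself a powerful $p$-group, and that $N \leq H^{p}$.

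The first is standard. By Theorem~2(2), $G^{p^{i}}$ is powerfully embedded in $G$, so
\[
[G^{p^{i}},G^{p^{i}}] \leq [G^{p^{i}},G] \leq (G^{p^{i}})^{p},
\]
which says precisely that $H = G^{p^{i}}$ is powerful. The second hypothesis amounts to the identity $(G^{p^{i}})^{p} = G^{p^{i+1}}$. The inclusion $G^{p^{i+1}} \leq (G^{p^{i}})^{p}$ is immediate, since $x^{p^{i+1}} = (x^{p^{i}})^{p}$ for every $x \in G$. For the reverse inclusion I would invoke Theorem~2(1): every element of $G^{p^{i}}$ is an honest $p^{i}$-th power $x^{p^{i}}$ of some $x \in G$, whose $p$-th power is $x^{p^{i+1}} \in G^{p^{i+1}}$, and such elements generate $(G^{p^{i}})^{p}$.

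With both hypotheses verified, the main theorem, applied to the powerful $p$-group $H$ together with its normal subgroup $N \leq H^{p}$, yields the powerful nilpotence of $N$. I do not foresee any genuine obstacle here; the only point warranting a moment's care is the identification $(G^{p^{i}})^{p} = G^{p^{i+1}}$, which works precisely because in a powerful $p$-group the power subgroups coincide with their sets of actual $p^{k}$-th powers.
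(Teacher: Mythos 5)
Your proposal is correct and is exactly the intended argument: the paper simply calls the corollary ``immediate'' from the main theorem, and you have supplied the two routine verifications (that $G^{p^{i}}$ is powerful and that $(G^{p^{i}})^{p}=G^{p^{i+1}}$) that make this precise. Nothing further is needed.
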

This corollary holds for $p=2$ if $G^{p^{i+1}}$ is replaced with
$G^{p^{i+2}}$.

\section{Acknowledgements}

The author is very grateful to Gustavo Fern\'andez-Alcober for conjecturing
the result proved in this paper and for his comments on this manuscript.
The author would like to thank Gunnar Traustason for his consistently
excellent supervision throughout his PhD and for noting an error in
a preliminary draft of this manuscript. The author also wishes to
thank Gareth Tracey for his feedback and critical reading of this
paper.

\bibliographystyle{amsplain}
\bibliography{C:/Users/flowe/OneDrive/PhD/WritingUp/Thesis/referenceDatabase}

\end{document}